\newcommand\blfootnote[1]{%
  \begingroup
  \renewcommand\thefootnote{}\footnote{#1}%
  \addtocounter{footnote}{-1}%
  \endgroup
}
\newtheorem{theorem}{Theorem}[section]
\newtheorem*{theorem*}{Theorem}
\newtheorem{lemma}[theorem]{Lemma}
\newtheorem{corollary}[theorem]{Corollary}
\theoremstyle{definition}
\newtheorem{definition}[theorem]{Definition}
\DeclareMathOperator{\Ric}{Ric}
\DeclareMathOperator{\tr}{tr}
\def \RR {\mathbb{R}}
\def \nablas {\nabla^\Sigma}
\title{The Log-Sobolev inequality for a submanifold in manifolds with asymptotic non-negative intermediate Ricci curvature }
\author{Jihye Lee, Fabio Ricci}
\date{}
\begin{document}
\maketitle

\begin{abstract}
    We prove a sharp Log-Sobolev inequality for submanifolds of a complete non-compact Riemannian manifold with asymptotic non-negative intermediate Ricci curvature and Euclidean volume growth. Our work extends a result of Dong-Lin-Lu \cite{DongLinLu2} which already generalizes Yi-Zheng's \cite{Yi-Zheng} and Brendle's \cite{B_log}.  
\end{abstract}
\blfootnote{Jihye Lee: UC Santa Barbara, Department of Mathematics, email: jihye@ucsb.edu. }
\blfootnote{Fabio Ricci: UC Santa Barbara, Department of Mathematics, email: FabioRicci@ucsb.edu. }
\section{Introduction}
Log-Sobolev inequalities have been already studied in mathematical physics and information theory in the 50's (\cite{Stam}, \cite{Federbush}) then it has been extensively studied in analysis and geometry (see e.g. \cite{Gross2}, \cite{Gross}, \cite{Simon}, \cite{Ledoux}, \cite{Ecker}, \cite{Villani}, \cite{Ohta}, \cite{Perelman}), with recent developments including their applications to heat kernel and Hamilton-Jacobi equations, inequalities in convex geometry, and their role in optimal transport.
Brendle \cite{B_log} obtained a sharp log-Sobolev inequality for submanifolds in Euclidean space, improving \cite{Ecker}, using the Alexandroff-Bakelman-Pucci (ABP) method and optimal transport. Later Yi and Zheng  \cite{Yi-Zheng} extended this result to Riemannian manifolds with non-negative sectional curvature and Euclidean volume growth conditions. Subsequently Dong-Lin-Lu \cite{DongLinLu2} further extended this to non-negative asymptotic sectional curvature.
In this paper, we prove that the logarithmic Sobolev inequality holds 
under non-negative intermediate Ricci curvature. Furthermore, we extend our result  to the asymptotic non-negative intermediate Ricci curvature by introducing error terms that depends only on the curvature's decay at infinity, as in \cite{DongLinLu2}.

Let $(M^n,g)$ be a complete non-compact Riemannian manifold. For $1 \leq k\leq n-1$ we consider a $k$-dimensional subspace 
$P$ of a tangent space $T_xM$ at $x\in M$.
Given any tangent vector $v \in T_xM$ with $v \bot P$,
the $k$-intermediate Ricci curvature ($k$th-Ricci curvature) with respect to $P$ in the direction of $v$ is defined as
$$\Ric_k (P,v) = \sum_{i=1}^k  \mathrm{Sec}(v, e_i) |v|^2,$$
where $\{e_1, \ldots, e_k\}$ is an orthonormal basis of $P$.
The $k$-intermediate Ricci curvature  interpolates between Ricci and sectional curvature. 
A Riemannian manifold has non-negative $k$-Ricci curvature if
 $\Ric_k(P,v) \geq 0$ for any $x\in M$, $k$-dimensional subspace $P\subset T_xM$, and a unit tangent vector $v\in T_xM$ perpendicular to $P$ (note that some papers \cite{wang22, Mondino} require  the stronger condition that this holds for all $v$, not necessary just the perpendicular ones). This condition is denoted by $\Ric_k \geq 0$.
In particular, it exhibits a monotonicity property: if $n \leq m$, then $\Ric_n \geq 0$ implies $\Ric_m \geq 0$. 
This curvature condition has been well studied to explore the gap of the global results with sectional curvature bounds and Ricci curvature bounds, see for example: \cite{shen, Wu,Chahine,Gui2,Gui3,Reiser,Mondino,ma-wu,wang22}. Our work follows this spirit and generalizes Yi-Zheng's result with sectional curvature lower bound (Theorem 1.1, \cite{Yi-Zheng}) and the corresponding asymptotic extension by Dong-Lin-Lu (Theorem 1.1, \cite{DongLinLu2}). Ketterer-Mondino \cite{Mondino} noted that it is possible to characterize lower bounds of $k$-intermediate Ricci curvature via optimal transport. We will adopt this approach, developed in \cite{ma-wu} and \cite{wang22}, to prove our main result.

Let $M$ be an $n$-dimensional non-compact Riemmanian manifold with non-negative Ricci curvature. 
The asymptotic volume ratio of $M$ is defined as
$$\theta := \lim_{r \to \infty}\frac{|B_r(p)|}{\omega_nr^n},$$
where $p$ is some fixed point in $M$, $\omega_n$ is the volume of the unit ball in Euclidean space $\RR^n$, and $|B_r(p)|$ is the volume of a ball of radius $r$ centered at $p$ in $M$. The Bishop-Gromov volume comparison assures that the limit exists, it does not depend on the choice of $p$ and that $\theta \leq 1$. We say that $M$ has Euclidean volume growth if $\theta >0$.

In this paper we are using the definition of asymptotically non negative sectional ( resp. Ricci) curvature given in Abresch \cite{Abresch} (see also \cite{Abresch2}, \cite{Kasue}, \cite{Zhang}). Zhu (see theorem 2.1 in \cite{Zhu}) proved the equivalent of the classical Bishop-Gromov volume comparison Theorem with $\operatorname{Ric}\geq 0$ by replacing $\mathbb{R}^n$ with a different model space.  We will need to define the equivalent of the usual asymptotic volume ratio. 

\begin{definition}[Abresch \cite{Abresch}]
An $n$-dimensional complete non-compact Riemannian manifold $(M,g)$ with base point $o$ has asymptotically non-negative Sectional curvature (Ricci curvature, respectively) if and only if there exists any non-negative, non-increasing function  $\lambda: [0,\infty)\rightarrow[0,\infty) $ such that the following holds:
\begin{itemize}
    \item[(1)] $b_0(\lambda)=\int_0^\infty t\lambda(t)dt < \infty$
    \item[(2)] $\operatorname{Sec}\geq -\lambda(d(o,p)) $ at each point $p\in M$. \text{   }  $\left( \operatorname{Ric}\geq -(n-1)\lambda(d(o,p)), \text{ respectively} \right)$
\end{itemize}
\end{definition}
The first condition also implies that $b_1(\lambda)=\int_0^\infty \lambda(t)dt < \infty$.
To extend this notion of asymptotically non-negative sectional curvature to intermediate $k$-Ricci curvature, one can simply replace the second  condition with:
\begin{align*}
    \operatorname{Ric}_k\geq -k\lambda(d(o,p)) \text{ at each point } p \in M.
\end{align*}
More precisely, $\Ric_k(P,v) \geq - k \lambda (d(o,p))$ for any $k$ dimensional subspace $P \subset T_pM$ and unit tangent vector $v$ perpendicular to $P$.

The usual non-negative sectional curvature (Ricci or $\Ric_k$ respectively) condition is equivalent to requiring $\lambda \equiv 0$.
It's immediately evident from the definition to see that this class of manifolds includes those with non-negative sectional curvature (Ricci or $\operatorname{Ric}_k$ respectively) outside a compact set and asymptotically flat manifolds as well. Furthermore, the monotonicity of the intermediate curvature  still holds:  if $k_1 \leq k_2$, then
    $\Ric_{k_1} \geq - k_1\lambda ( d(o,p))$ implies $\Ric_{k_2} \geq - k_2 \lambda ( d(o,p))$.

In this setting, we also replace $\theta$, the asymptotic volume ratio of $M$, with another similar quantity that will keep track of the geometry at infinity.  To achieve this, we define $h(t)$ as the unique solution of the following ODE:
\begin{align*}
   \begin{cases}
h''(t) = \lambda(t) h(t),\\
h(0) = 0, h'(0)=1.
\end{cases}
\end{align*}
We now define the asymptotic volume ratio of $M$ with respect to $h$ by
\begin{align*}
    \theta_h=\lim_{r\rightarrow\infty}\frac{\mathcal|B_r(o)|}{n\omega_n\int_0^rh^{n-1}(t)dt},
\end{align*}
In the above definition, $|B_r(o)|$ represents the volume of the ball of radius $r$ centered at $o$ in $M$ and $\omega_n$ represents the volume of the unit ball in $\RR^n$. In \cite{Zhu}, to prove the corresponding version of the Bishop-Gromov volume comparison in this setting, Zhu noted that the following function of $r$ is non-increasing:
\begin{align*}
    \frac{|B_r(o)|}{n\omega_n\int_0^rh^{n-1}(t)dt}.
\end{align*}
This ensures that the limit exists and $\theta_h$ is well-defined. Similarly, we say that $M$ has Euclidean volume growth if $\theta_h>0$.
In particular, when a manifold has non-negative intermediate Ricci curvature,  $\lambda(t) \equiv 0$, $h(t) = t$, and $\theta_h$ is the usual asymptotic volume ratio $\theta$.
In the second section of this paper we will show a connection between $\theta_h$ and an integral of a certain Guassian function. To state our theorem, we first define, for any non-negative $t$, a decreasing function $P(t)$:
\begin{align*}
    P(t)=(4 \pi )^{-\frac{n}{2}}\int_{\mathbb{R}^N} e^{-\frac{(|x|+t)^2}{4}}dx
\end{align*}
Notice that $P(0)=1$.
Below is our main result.
\begin{theorem}\label{thm:main-logASYMP}
Let $M^{n+m}$ be a complete, non-compact manifold of dimension $n+m$ with  asymptotically non-negative $\mathrm{Ric}_{k}$, where $k=\min\{n-1,m-1\}$ and Euclidean volume growth. Assume that $\Sigma^n$ is an $n$-dimensional compact submanifold without a boundary.
Then, for any positive smooth function $f$, we have
\begin{align}\label{ineq:log-sobolevASYMP}
    \int_\Sigma &f \left(\log f + \log P(4b_1 n ) \theta_h + \frac{n}{2}\log (4 \pi) +n  + 4b_1^2 n^2 + (n+m-1) \log \frac{1+b_0}{e^{2r_0b_1 + b_0}}\right)d V (x) \nonumber\\
    &- \int_\Sigma \frac{|\nablas f|^2}{f} - \int_\Sigma f |H|^2  \leq \int_\Sigma f d V (x) \left(\log \int_\Sigma f(x) dV (x)\right),
\end{align}
where $\theta_h$ is the asymptotic volume ratio of $M$ with respect to $h$ and $H$ is the mean curvature vector of $\Sigma$ and $\alpha=(n+m-1)(2r_0b_1+b_0)$, where $r_0=\operatorname{max}_{x\in \Sigma }d(o,x)$.
\end{theorem}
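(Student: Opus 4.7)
I would adapt Brendle's ABP/optimal-transport proof \cite{B_log} to the submanifold setting with asymptotic intermediate Ricci, combining the Jacobi-field splitting of Ketterer--Mondino, Ma--Wu and Wang \cite{Mondino, ma-wu, wang22} with the asymptotic bookkeeping introduced in \cite{DongLinLu2}. After normalizing so that $\int_\Sigma f\, dV = 1$, the first step is to solve on the closed manifold $\Sigma$ a linear elliptic equation of the form
\begin{equation*}
\mathrm{div}_\Sigma\bigl(f\,\nablas \varphi\bigr) = f\log f - C_f\, f,
\end{equation*}
where $C_f$ is chosen so that the right-hand side integrates to zero; solvability and regularity follow from standard elliptic theory since $\Sigma$ is closed without boundary and $f>0$. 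I then define the transport-type map on the normal bundle
\begin{equation*}
\Phi\colon N\Sigma \longrightarrow M, \qquad \Phi(x,y) = \exp_x\bigl(\nablas\varphi(x) + y\bigr).
\end{equation*}

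For each target point $z$ in a large ball of $M$, the ABP function $x \mapsto \varphi(x) - \tfrac12 d(x,z)^2$ attains a minimum at some $x_\ast \in \Sigma$; the first- and second-order conditions there force $z = \Phi(x_\ast, y_\ast)$ for a unique $y_\ast \in T_{x_\ast}^\perp\Sigma$ with controlled norm. This is the surjectivity step and shows that $\Phi$ covers, up to a defect one can estimate, the bulk of $M$.

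The technical heart is a pointwise Jacobian estimate for $\Phi$. Along the geodesic $t\mapsto \exp_x(t\xi)$ with $\xi = \nablas\varphi(x) + y$, I would split the $n+m-1$ transverse Jacobi fields into a tangential block of size $n-1$ and a normal block of size $m-1$ and apply the intermediate-Ricci Bishop comparison developed in \cite{ma-wu, wang22} with $k=\min\{n-1,m-1\}$. Because $\mathrm{Ric}_k \geq -k\lambda(d(o,\cdot))$ only holds asymptotically, Zhu's model function $h$ replaces the Euclidean radial factor $t$, producing a bound of schematic form
\begin{equation*}
\mathrm{Jac}\,\Phi(x,y) \;\leq\; \Bigl(1 + \tfrac{\Delta_\Sigma\varphi - \langle H,y\rangle}{n+m-1}\Bigr)^{n+m-1}\, \mathcal{R}(|\xi|)\, e^{\alpha},
\end{equation*}
where $\mathcal{R}$ encodes the $h$-radial factor and the exponential $e^{\alpha}$ absorbs the cumulative loss of $\int \lambda\,dt$ along any geodesic based in $\Sigma$ (which has distance at most $r_0$ from $o$). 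Integrating against the area formula and comparing with the Euclidean-growth asymptotic $|B_R(o)| \sim \theta_h\, n\omega_n \int_0^R h^{n+m-1}$, then applying $(1+s/N)^N \leq e^s$, converts the polynomial factor into an exponential and reduces the fiber integral over $y \in T_x^\perp\Sigma$ to a Gaussian. The bound $|\nablas\varphi+y|^2 \geq (|y|-|\nablas\varphi|)^2$ together with a completion of the square that introduces a linear penalty of size $4b_1 n$ in $|\nablas\varphi|$ produces exactly the factor $P(4b_1 n)\,\theta_h$ and the residual quadratic term $4b_1^2 n^2$; the remaining exponential $e^{\alpha}$ supplies the final logarithmic term in \eqref{ineq:log-sobolevASYMP}.

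\textbf{Main obstacle.} The delicate point is the Jacobian estimate in the asymptotic regime. One must verify that the intermediate-Ricci comparison along a geodesic emanating from the compact $\Sigma$ accumulates only a total multiplicative error captured by $(1+b_0)^{n+m-1}$ and $e^{2r_0 b_1 + b_0}$, and then that this error combines cleanly with the Gaussian completion of the square to produce the precise constants $P(4b_1 n)$, $4b_1^2 n^2$, and $\alpha = (n+m-1)(2r_0 b_1 + b_0)$ appearing in the statement. This bookkeeping generalizes \cite{DongLinLu2} from sectional to intermediate curvature and relies crucially on the splitting-plus-comparison technique of \cite{wang22, ma-wu} rather than a direct Rauch comparison; any looseness in how the two Jacobi blocks are combined would destroy the sharpness of the constant $P(4b_1 n)\,\theta_h$, which specializes to $\theta$ in the $\lambda\equiv 0$ case and thus must match \cite{Yi-Zheng} exactly.
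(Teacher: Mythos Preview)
Your overall architecture is right---ABP/transport map on the normal bundle, tangential/normal splitting of the Jacobi equation under the $\mathrm{Ric}_k$ hypothesis, and asymptotic bookkeeping in the style of \cite{DongLinLu2}---and this is indeed how the paper proceeds. However, three concrete points in your outline would not work as written and differ from what the paper actually does.

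\textbf{The auxiliary PDE is not the right one.} You propose $\mathrm{div}_\Sigma(f\nablas\varphi)=f\log f - C_f f$. The paper instead solves
\[
\mathrm{div}_\Sigma(f\nablas u)=f\log f-\frac{|\nablas f|^2}{f}-f|H|^2,
\]
after normalizing $f$ so that the right side integrates to zero. The extra terms $-|\nablas f|^2/f-f|H|^2$ are not cosmetic: they are exactly what allows the pointwise completion of the square
\[
\Delta_\Sigma u-\langle H,y\rangle \;\le\; \log f+\frac{|\nablas u|^2+|y|^2}{4}-\frac{|2H+y|^2}{4}
\]
(the paper's Lemma~\ref{lem:asymp:laplacian}). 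With your equation one only has the cross terms $-\langle\nablas f,\nablas\varphi\rangle/f-\langle H,y\rangle$, and Cauchy--Schwarz produces $+|\nablas f|^2/f^2+|H|^2$ with the \emph{wrong} sign; after exponentiation this does not yield the desired $-\int|\nablas f|^2/f-\int f|H|^2$ on the left of \eqref{ineq:log-sobolevASYMP}. This is the key algebraic identity of Brendle's log-Sobolev argument and must be built into the PDE.

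\textbf{The Jacobian estimate has the wrong block structure.} You write the schematic bound with the factor $\bigl(1+\tfrac{\Delta_\Sigma\varphi-\langle H,y\rangle}{n+m-1}\bigr)^{n+m-1}$ and speak of splitting $n+m-1$ transverse fields into blocks of sizes $n-1$ and $m-1$. In the submanifold ABP method the split is into an $n$-dimensional tangential block (initial value $I_n$, initial derivative governed by $\mathrm{Hess}_\Sigma u$ and $II$) and an $m$-dimensional normal block (initial value $0$, initial derivative $I_m$). Only the tangential block carries $\Delta_\Sigma u-\langle H,y\rangle$; the comparison gives
\[
\det D\Phi_t \;\le\; \Bigl(2b_1\sqrt{|\nablas u|^2+|y|^2}+\tfrac{1}{t}+\tfrac{1}{n}\bigl(\Delta_\Sigma u-\langle H,y\rangle\bigr)\Bigr)^{\,n}\, t^{n+m}\,e^{\alpha},
\]
with exponent $n$, not $n+m-1$. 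The role of $k=\min\{n-1,m-1\}$ is that after rotating the frame to separate the component of $\bar\gamma'$ along $T_{\bar x}\Sigma$ (angle $a$), the partial traces $\mathrm{tr}_nS$ and $\mathrm{tr}_mS$ are controlled by $\mathrm{Ric}_{n-1}$ and $\mathrm{Ric}_{m-1}$ respectively; monotonicity then reduces both to the single hypothesis $\mathrm{Ric}_k\ge -k\lambda$.

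\textbf{The scaling parameter is missing.} Your map $\Phi(x,y)=\exp_x(\nablas\varphi+y)$ has no parameter, whereas the paper (following \cite{Yi-Zheng,DongLinLu2}) uses $\Phi_t(x,y)=\exp_x(t\nablas u+ty)$, proves $\Phi_r(A_r)=M$ for every $r>0$, integrates against the Gaussian weight $e^{-(\,d(x,\Phi_r)/2r+2b_1n\,)^2}$, and only then sends $r\to\infty$. It is precisely this limit, combined with Lemma~\ref{lem:theta-limitASYMP3} (which identifies the Gaussian integral with $P(4b_1n)\theta_h$) and the bound $\lim_{t\to\infty} t/h(t)\le 1/(1+b_0)$, that produces the constants $P(4b_1n)\theta_h$ and $(n+m-1)\log\frac{1+b_0}{e^{2r_0b_1+b_0}}$. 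Without the $t$-family there is no mechanism to extract the asymptotic volume ratio on a general Riemannian manifold.

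Once these three points are corrected your sketch becomes essentially the paper's proof.
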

Similar to the Michael-Simon-Sobolev inequality, the inequality \eqref{ineq:log-sobolevASYMP} contains a term that involves the mean curvature and depends on the asymptotic volume ratio of $M$.
If $\lambda\equiv 0$, the usual non-negative $\operatorname{Ric}_k$ condition, in other words $b_0=b_1=0$, then our Theorem \ref{thm:main-logASYMP} becomes:
\begin{corollary}\label{coro:intermediate}
Let $M^{n+m}$ be a complete, non-compact manifold of dimension $n+m$ with Euclidean volume growth and $\mathrm{Ric}_{k} \geq 0$, where $k=\min\{n-1,m-1\}$. Let $\Sigma^n$ be a $n$-dimensional compact submanifold without a boundary.
Then, for any positive smooth function $f$, we have

\begin{align}\label{MAIN}
    \int_\Sigma &f\left(\log f + n+\frac{n}{2} \log (4\pi) + \log \theta \right) dV - \int_\Sigma \frac{|\nablas f|^2}{f} dV - \int_\Sigma f|H|^2 dV \nonumber\\
    &\leq \left(\int_\Sigma f dV\right)\log \left(\int_\Sigma f dV\right),
\end{align}
where $\theta$ is the asymptotic volume ratio of $M$ and $H$ is the Mean curvature vector of $\Sigma$.
\end{corollary}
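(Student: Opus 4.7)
The cleanest proof is to recognize Corollary~\ref{coro:intermediate} as the $\lambda\equiv 0$ specialization of Theorem~\ref{thm:main-logASYMP}. Under $\lambda\equiv 0$ one immediately gets $b_0=\int_0^\infty t\lambda(t)\,dt=0$ and $b_1=\int_0^\infty \lambda(t)\,dt=0$; the model ODE $h''=\lambda h$ with $h(0)=0,\;h'(0)=1$ yields $h(t)=t$, so $\theta_h$ reduces to the standard asymptotic volume ratio $\theta$. Since $P(0)=1$ one has $\log P(4b_1 n)=0$, and both auxiliary error terms $4b_1^2 n^2$ and $(n+m-1)\log\tfrac{1+b_0}{e^{2r_0 b_1 + b_0}}$ vanish. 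Substituting into \eqref{ineq:log-sobolevASYMP} then yields \eqref{MAIN} verbatim.

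Because the corollary rests on Theorem~\ref{thm:main-logASYMP}, I would also sketch the route I expect the authors to take for the theorem itself, following the ABP / optimal-transport framework of Brendle and its adaptations in \cite{Yi-Zheng, DongLinLu2}. After normalizing $f$ so that $\int_\Sigma f\,dV$ matches a Gaussian reference mass, I would produce a transport potential $u\colon\Sigma\to\RR$ satisfying a Monge-Amp\`ere-type equation that pushes $f\,dV_\Sigma$ to a Gaussian on $\RR^{n+m}$, and then study the probe map
\[
\Phi\colon \Sigma\times\RR^m \longrightarrow M,\qquad \Phi(x,y)=\exp_x^M\bigl(\nablas u(x)+y\bigr),
\]
where $y$ is identified with a normal vector at $x$ via a local orthonormal frame of $T^\perp\Sigma$.

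The crux is a sharp upper bound on $\log\det d\Phi$ along each geodesic $t\mapsto \exp_x^M\bigl(t(\nablas u(x)+y)\bigr)$. Splitting $d\Phi$ into its tangential and normal contributions and invoking $\Ric_k$ with $k=\min\{n-1,m-1\}$ delivers the Jacobi-field comparison in the $h$-warped model; here I would use the Ketterer-Mondino characterization of $\Ric_k$ via optimal transport developed in \cite{ma-wu, wang22}, which is precisely what lets intermediate Ricci replace sectional curvature. The asymptotic curvature enters through cumulative errors of order $b_0$ along the geodesic and $b_1 n$ in the Gaussian tail, producing exactly the factors $P(4b_1 n)$, $e^{2r_0 b_1 + b_0}$, and $4b_1^2 n^2$ appearing in the theorem. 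AM-GM on the eigenvalues of $d\Phi$ combined with the Monge-Amp\`ere equation for $u$ converts this pointwise bound into an integral inequality, with the $M$-side mass controlled by $\theta_h$ through Zhu's $h$-Bishop-Gromov comparison \cite{Zhu}. The main obstacle, which disappears in the corollary, is bookkeeping the $\lambda$-dependent errors along long geodesics; with $\lambda\equiv 0$ the Jacobi comparison is exact, and the argument collapses to the sharp Jacobian computation in the flat model, which is what underlies \eqref{MAIN}.
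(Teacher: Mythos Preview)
Your derivation of the corollary is correct and identical to the paper's: the authors simply observe that setting $\lambda\equiv 0$ (hence $b_0=b_1=0$, $h(t)=t$, $\theta_h=\theta$, and $P(0)=1$) collapses Theorem~\ref{thm:main-logASYMP} to Corollary~\ref{coro:intermediate}.

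Your accompanying sketch of the main theorem's proof is right in spirit but diverges from the paper in two technical respects. First, the normalization is not that $\int_\Sigma f\,dV$ matches a Gaussian reference mass; the paper instead scales $f$ so that $\int_\Sigma f\log f - \int_\Sigma |\nablas f|^2/f - \int_\Sigma f|H|^2 = 0$ (equation~\eqref{eq:scalingAsymp}). Second, the auxiliary potential $u$ does not solve a Monge--Amp\`ere equation but the \emph{linear} divergence-form PDE $\mathrm{div}_\Sigma(f\,\nablas u) = f\log f - |\nablas f|^2/f - f|H|^2$ (equation~\eqref{eq:AsympPDE}), whose solvability comes from Fredholm theory rather than optimal-transport existence. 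The Jacobian bound is then obtained via the elementary inequality $\lambda \le e^{\lambda-1}$ (Lemma~\ref{lem:estimate-expASYMP}) rather than AM--GM on eigenvalues. None of this affects the corollary itself, which follows by pure substitution as you wrote.
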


This gives the following two consequences by choosing $f\equiv1$ in \eqref{MAIN} and apply the argument in Section 4 of \cite{Yi-Zheng}.
\begin{corollary}\label{coro:1-2}
    Let $M^{n+m}$ be a complete, non-compact manifold of dimension $n+m$ with non-negative intermediate $k$-Ricci curvature, where $k = \min\{n-1,m-1\}$. Suppose $M$ has a Euclidean volume growth. Then, there is no closed minimal submanifold of dimension $n$ in $M$.
\end{corollary}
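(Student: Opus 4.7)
The plan is to proceed by contradiction, following the hint in the statement: assume $\Sigma^{n}\subset M^{n+m}$ is a closed minimal submanifold, so $H\equiv 0$ on $\Sigma$, and plug $f\equiv 1$ into \eqref{MAIN}. With this choice $\log f=0$, $|\nablas f|=0$, and the mean curvature term vanishes by minimality, so \eqref{MAIN} collapses to
\begin{equation*}
|\Sigma|\Bigl(n+\tfrac{n}{2}\log(4\pi)+\log\theta\Bigr)\;\leq\;|\Sigma|\log|\Sigma|,
\end{equation*}
equivalently to the uniform volume lower bound
\begin{equation*}
|\Sigma|\;\geq\;\theta\,(4\pi)^{n/2}\,e^{n}. \qquad (\ast)
\end{equation*}

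By itself $(\ast)$ is not yet a contradiction; to upgrade it I would follow the scaling argument from Section 4 of \cite{Yi-Zheng} and exploit the scale invariance of all hypotheses. For each $\lambda>0$, rescale $g\mapsto \tilde g:=\lambda^{2}g$. Under a constant rescaling: $(M,\tilde g)$ is still complete and non-compact; the sign of $\Ric_{k}$ is preserved, so non-negative $k$-intermediate Ricci still holds; the Christoffel symbols are unchanged, whence the mean curvature vector transforms as $H_{\tilde g}=\lambda^{-2}H_{g}\equiv 0$ and $\Sigma$ remains minimal; and the asymptotic volume ratio $\theta$ is invariant, since $d_{\tilde g}=\lambda d_{g}$ while the volume form scales by $\lambda^{n+m}$, giving
\begin{equation*}
\lim_{r\to\infty}\frac{|B_{r}(o)|_{\tilde g}}{\omega_{n+m}\,r^{n+m}}\;=\;\lim_{r\to\infty}\frac{\lambda^{n+m}|B_{r/\lambda}(o)|_{g}}{\omega_{n+m}\,r^{n+m}}\;=\;\theta.
\end{equation*}
Thus Corollary \ref{coro:intermediate} applies verbatim to $(M,\tilde g,\Sigma)$ and yields $\lambda^{n}|\Sigma|_{g}=|\Sigma|_{\tilde g}\geq \theta(4\pi)^{n/2}e^{n}$. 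Sending $\lambda\to 0^{+}$, the left side goes to $0$ while the right side is a fixed positive constant (Euclidean volume growth gives $\theta>0$), the desired contradiction.

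The argument is essentially a two-line calculation once Corollary \ref{coro:intermediate} is available; the main, and really only, obstacle is pure bookkeeping -- verifying that each hypothesis of the corollary and each explicit constant in \eqref{MAIN} transforms correctly under $g\mapsto\lambda^{2}g$, so that no hidden $\lambda$-dependent factor rescues the left-hand side of $(\ast)$ in the limit. The checks above confirm this, and the non-existence of a closed minimal $\Sigma^{n}$ follows.
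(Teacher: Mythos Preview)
Your proof is correct and follows essentially the same approach as the paper: plug $f\equiv 1$ into \eqref{MAIN} and then apply the blow-down/scaling argument from Section~4 of \cite{Yi-Zheng}, exactly as the paper instructs. Your verification that $\theta$, the $\Ric_k\geq 0$ condition, and minimality are all scale-invariant is the whole content of that argument, and sending $\lambda\to 0^{+}$ gives the contradiction just as you wrote.
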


\begin{corollary}
    Let $M^{n+m}$ be a complete, non-compact manifold of dimension $n+m$ with non-negative intermediate $k$-Ricci curvature, where $k = \min\{n-1,m-1\}$. Suppose there is a closed minimal submanifold of dimension $n$ in $M$. Then, the asymptotic volume ratio of $M$ is zero.
\end{corollary}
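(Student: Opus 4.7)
The plan is to prove the statement as the contrapositive of the preceding Corollary~\ref{coro:1-2}: assuming a closed minimal $\Sigma^n\subset M$ exists, I would plug it into the inequality \eqref{MAIN} of Corollary~\ref{coro:intermediate} with the trivial test function, and then rescale the ambient metric to force $\theta=0$. This is the Section~4 argument of \cite{Yi-Zheng} that the paper invokes.

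First I would argue by contradiction, supposing $\theta>0$ so that Corollary~\ref{coro:intermediate} actually applies. Plugging $f\equiv1$ into \eqref{MAIN} and using $H\equiv0$ kills the gradient and mean curvature terms, collapsing the inequality to $n+\tfrac{n}{2}\log(4\pi)+\log\theta \le \log|\Sigma|$, i.e.
\begin{equation*}
    \theta \leq (4\pi)^{-n/2} e^{-n}\, |\Sigma|.
\end{equation*}
On its own this is only a bound on $\theta$ in terms of $|\Sigma|$ --- not a contradiction --- so extra input is needed.

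The extra input is scaling. I would replace the ambient metric $g$ by $g_\lambda = \lambda^2 g$ for an arbitrary parameter $\lambda>0$. Sectional curvatures pick up a factor of $\lambda^{-2}$, so non-negativity of $\Ric_k$ is preserved; minimality of $\Sigma$ is scale-invariant; and $\theta$ itself is scale-invariant because both $|B_r|_{g_\lambda}$ and the comparison quantity $\omega_n r^n$ transform by the same factor $\lambda^n$. The one quantity that does change is the induced volume, $|\Sigma|_{g_\lambda} = \lambda^n|\Sigma|$. Re-running the first step in $(M,g_\lambda)$ then produces
\begin{equation*}
    \theta \leq (4\pi)^{-n/2} e^{-n}\, \lambda^n |\Sigma|
\end{equation*}
for every $\lambda>0$; sending $\lambda\to0^+$ gives $\theta\le 0$, contradicting $\theta>0$.

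I do not anticipate a real obstacle; the argument is formal once Corollary~\ref{coro:intermediate} is in hand. The only step worth verifying carefully is the scale invariance of $\theta$ (in the $\Ric_k\geq 0$ case one has $\lambda(t)\equiv 0$, $h(t)=t$, and $\theta_h=\theta$ coincides with the usual asymptotic volume ratio), so that the rescaling genuinely preserves every hypothesis needed to invoke the inequality \eqref{MAIN} on $(M,g_\lambda)$.
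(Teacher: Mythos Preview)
Your proposal is correct and follows exactly the approach the paper indicates: plug $f\equiv 1$ into \eqref{MAIN} and run the scaling (blow-down) argument of Section~4 of \cite{Yi-Zheng}. One cosmetic slip: since $\dim M = n+m$, the scale-invariance of $\theta$ should be stated with $\omega_{n+m}r^{n+m}$ and the factor $\lambda^{n+m}$, not $\omega_n r^n$ and $\lambda^n$; this does not affect the argument.
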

In the case of any hypersurface ($m=1$) under non-negative Ricci curvature condition, these two corollaries have been proved by Agostiniani-Fogagnolo-Mazzieri (see Theorem 1.6 in \cite{mazzieri}). We do not recover this case in our result since $k= \min\{n-1,m-1\}$ would be zero. On the other hand, for higher dimension and codimension, these corollaries are also new in the usual non-negative $k$-intermediate Ricci curvature setting. When $k=1$, the standard $\operatorname{Ric}_1\geq 0$ is equivalent to the non-negative sectional curvature condition. Thus, we recover the corollaries in Yi-Zheng (Corollaries 1.1 and 1.2 in \cite{Yi-Zheng}).

The same argument (Section 4, \cite{Yi-Zheng}) cannot be applied in the case of asymptotic non-negative $k$-intermediate Ricci curvature setting because the error term we have introduced is not scale-invariant and will blow up if we blow down the metric as in \cite{Yi-Zheng}.

Note that the previous two corollaries do not hold for $\Ric\geq 0$ total dimension $4$; a counterexample is the Eguchi-Hanson metric on $TS^2$, which is Ricci flat with Euclidean volume growth and has a totally geodesic submanifold $S^2$. See, for example, \cite[Page 270]{Anderson}.

The proof of Theorem \ref{thm:main-logASYMP} relies on the Alexandroff-Bakelman-Pucci (ABP) method, a standard technique in Euclidean space \cite{Trud}. Cabrè \cite{Cabre} extended this method to manifolds. The classical estimate in $\mathbb{R}^n$ uses affine functions, but on manifolds, hyperplanes are replaced with paraboloids interpreted as graphs of the distance squared relative to a point. If a lower bound condition on the curvature is added, this method has been successfully used to prove geometric inequalities (see for example, \cite{ABP1, ABP2}). In particular, Brendle \cite{BrendleF} used this method to prove a monotonicity result in terms of the Jacobian of a certain transport map, while Wang \cite{wang22} and Ma-Wu \cite{ma-wu} independently generalized this method to $k$-intermediate Ricci curvature. It will be a key ingredient in our proof.

\paragraph{Acknowledgements:}
 The authors gratefully acknowledge the valuable insights and guidance provided by their advisor Guofang Wei through helpful discussions. We would like to thank Frank Morgan for the references \cite{Stam} and \cite{Federbush}. We would also like to acknowledge Lingen Lu for bringing the manuscript \cite{DongLinLu2} to our attention. Their contribution allowed us to simplify Section 2 in the first version of our work. And thanks Jing Wu for reaching us with the updated version of their paper \cite{ma-wu} and useful comments.

\section{Asymptotic Estimate }
In this section we recall an estimate for $\theta_h$, which will be used in the final steps of the proof in the next section. We first recall:

\begin{lemma}[Lemma 2.1, \cite{Yi-Zheng}]\label{lem:theta-limitASYMP2}
    Let $M$ be a complete non-compact Riemannian manifold of dimension $N$ with non-negative Ricci curvature. Then,
    \begin{equation*}
        \theta= \lim_{r\to \infty} \left( \frac{(4 \pi )^{-\frac{N}{2}}}{r^N}\int_M e^{-\frac{d(x,p)^2}{4r^2}} d V (x)\right)
    \end{equation*}
    for any $p \in M$.
\end{lemma}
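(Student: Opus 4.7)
The plan is to reduce the integral of the Gaussian on $M$ to a one-dimensional integral involving the volume function $V(s):=|B_s(p)|$, and then recognize the asymptotic volume ratio inside a Gaussian integral using dominated convergence.

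First, since $e^{-d(x,p)^2/(4r^2)}$ depends only on the distance to $p$, I would invoke the coarea formula (or equivalently, integrate by parts against the distribution function of $d(\cdot,p)$) to write
\begin{equation*}
    \int_M e^{-d(x,p)^2/(4r^2)}\,dV(x)=\int_0^\infty e^{-s^2/(4r^2)}\,dV(s)=\int_0^\infty \frac{s}{2r^2}\,e^{-s^2/(4r^2)}\,V(s)\,ds,
\end{equation*}
where the boundary terms vanish because $V(s)$ grows at most polynomially (of order $s^N$) by Bishop-Gromov while the Gaussian decays. Then the substitution $s=ru$ gives
\begin{equation*}
    \frac{1}{r^N}\int_M e^{-d(x,p)^2/(4r^2)}\,dV(x)=\int_0^\infty \frac{u}{2}\,e^{-u^2/4}\,\frac{V(ru)}{r^N}\,du=\omega_N\int_0^\infty \frac{u^{N+1}}{2}\,e^{-u^2/4}\,\frac{V(ru)}{\omega_N(ru)^N}\,du.
\end{equation*}

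Next I would pass to the limit $r\to\infty$ under the integral sign. The key point is the Bishop-Gromov volume comparison: the ratio $V(ru)/(\omega_N(ru)^N)$ is non-increasing in its argument, bounded above by $1$, and converges pointwise (in $u>0$) to $\theta$ as $r\to\infty$ by the very definition of the asymptotic volume ratio. Hence we have the integrable dominating function $\omega_N\,(u^{N+1}/2)\,e^{-u^2/4}$, so by dominated convergence
\begin{equation*}
    \lim_{r\to\infty}\frac{1}{r^N}\int_M e^{-d(x,p)^2/(4r^2)}\,dV(x)=\omega_N\,\theta\int_0^\infty \frac{u^{N+1}}{2}\,e^{-u^2/4}\,du.
\end{equation*}

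Finally, I would compute the one-dimensional Gaussian integral explicitly via the change of variable $v=u^2/4$, obtaining
\begin{equation*}
    \int_0^\infty \frac{u^{N+1}}{2}\,e^{-u^2/4}\,du=2^N\,\Gamma\!\left(\tfrac{N}{2}+1\right),
\end{equation*}
and use $\omega_N=\pi^{N/2}/\Gamma(N/2+1)$ to simplify $\omega_N\cdot 2^N\Gamma(N/2+1)=(4\pi)^{N/2}$, which yields the claimed identity after multiplying by $(4\pi)^{-N/2}$. The main subtlety is the justification of the limit passage: it is essential that Bishop-Gromov provides both the monotonicity ensuring pointwise convergence for every $u>0$ and the uniform upper bound $V(ru)/(\omega_N(ru)^N)\le 1$ giving an integrable dominating function independent of $r$; without the global Bishop-Gromov comparison the interchange of limit and integral would fail.
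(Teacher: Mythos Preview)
Your proof is correct and follows essentially the same approach indicated in the paper: reduce the Gaussian integral via coarea and a scaling substitution, then pass to the limit using Bishop--Gromov. The only cosmetic difference is that the paper (in the analogous Lemma~\ref{lem:theta-limitASYMP3}) works with the area form $|\partial B_{rs}|$ and invokes monotone convergence, whereas you integrate by parts once to work with the volume $V(ru)$ and use dominated convergence; both routes rely on the same Bishop--Gromov monotonicity and yield the same constant $(4\pi)^{N/2}$.
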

The proof is based on the Bishop-Gromov volume comparison. However, in our asymptotic case, we utilize the inequality $r\leq h(r)$ instead. The upper bound for $h$, as provided in \cite{Zhu}, is $h(r) \leq e^{b_0}r$. Generally, the presence of $b_0\neq 0$ precludes us from replacing $r^N$ with $h^N$ in the above lemma. Following Dong-Lin-Lu \cite{DongLinLu2}, it is preferable to replace $r^N$ with $rh^{N-1}$, doing so, we first recall the definition of $P(t)$:
\begin{align*}
    P(t)=(4 \pi )^{-\frac{N}{2}}\int_{\mathbb{R}^N} e^{-\frac{(|x|+t)^2}{4}}dx.
\end{align*}
\begin{lemma}[Lemma 2.2, \cite{DongLinLu2}]\label{lem:theta-limitASYMP3}
    Let $M$ be a complete non-compact Riemannian manifold of dimension $N$ with asymptotic non-negative Ricci curvature. Then
    \begin{equation}\label{eq:asymptest}
         P(t)\theta_h =\lim_{r \to \infty}\frac{(4 \pi )^{-\frac{N}{2}}}{rh(r)^{N-1}} \int_M e^{- \frac{\left(\frac{d(x,o)}{r}+t\right)^2}{4}} dV (x), 
    \end{equation}
    where $o$ is the base point of $M$.
\end{lemma}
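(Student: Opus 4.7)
The plan is to reduce both sides of \eqref{eq:asymptest} to one-dimensional integrals and then pass to the limit using the defining property of $\theta_h$ together with the asymptotic behaviour of $h$ coming from $b_0 < \infty$. Writing $V(\rho) := |B_\rho(o)|$ and $A(\rho) := V'(\rho)$, I would first use the coarea formula to get
$$I(r) := \int_M e^{-(d(x,o)/r+t)^2/4}\,dV(x) = \int_0^\infty e^{-(\rho/r+t)^2/4}\,A(\rho)\,d\rho,$$
then integrate by parts (boundary contributions vanish, since $V(0)=0$ and $V(\rho)$ has at most polynomial growth by Bishop-Gromov while the Gaussian decays) and substitute $u = \rho/r$ to obtain
$$\frac{I(r)}{r\,h(r)^{N-1}} = \frac{1}{2}\int_0^\infty (u+t)\,e^{-(u+t)^2/4}\,\frac{V(ur)}{r\,h(r)^{N-1}}\,du.$$
Everything then reduces to analysing the single ratio $V(ur)/(r\,h(r)^{N-1})$ as $r \to \infty$.

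For the pointwise limit I would factor
$$\frac{V(ur)}{r\,h(r)^{N-1}} = \frac{V(ur)}{N\omega_N \int_0^{ur} h^{N-1}(s)\,ds}\cdot \frac{N\omega_N \int_0^{ur} h^{N-1}(s)\,ds}{r\,h(r)^{N-1}}.$$
The first factor tends to $\theta_h$ by Zhu's Bishop-Gromov comparison, since $ur \to \infty$ for each fixed $u>0$. For the second, the substitution $s = rv$ turns it into $N\omega_N \int_0^u (h(rv)/h(r))^{N-1}\,dv$. Because $h''=\lambda h \geq 0$, $h'$ is non-decreasing, and because $h(r) \leq e^{b_0} r$, $h'$ is bounded; hence $h'(r)\to L$ for some $L \geq 1$ and so $h(rv)/h(r)\to v$ pointwise for every $v>0$. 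The sandwich $r \leq h(r) \leq e^{b_0} r$ supplies the uniform bound $h(rv)/h(r) \leq e^{b_0} v$, so dominated convergence produces $\omega_N u^N$. Multiplying, $V(ur)/(r\,h(r)^{N-1}) \to \omega_N \theta_h\,u^N$ for each $u > 0$.

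To pass the limit through the outer integral, Bishop-Gromov gives $V(ur) \leq N\omega_N \int_0^{ur} h^{N-1} \leq \omega_N e^{(N-1)b_0}(ur)^N$, and $h(r) \geq r$ yields $r\,h(r)^{N-1} \geq r^N$, so the ratio is dominated by $\omega_N e^{(N-1)b_0} u^N$, which is integrable against the Gaussian weight $(u+t)e^{-(u+t)^2/4}$. Therefore
$$\lim_{r\to\infty}\frac{I(r)}{r\,h(r)^{N-1}} = \frac{\omega_N \theta_h}{2}\int_0^\infty (u+t)\,u^N e^{-(u+t)^2/4}\,du.$$
Writing $P(t)$ in polar coordinates and integrating by parts against the antiderivative $s^N/N$ gives
$$(4\pi)^{N/2} P(t) = N\omega_N\int_0^\infty s^{N-1} e^{-(s+t)^2/4}\,ds = \frac{\omega_N}{2}\int_0^\infty (s+t)\,s^N e^{-(s+t)^2/4}\,ds,$$
which matches the displayed limit after multiplication by $\theta_h$, proving \eqref{eq:asymptest}. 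The main obstacle is establishing $h(rv)/h(r) \to v$ with its uniform bound; once this is in hand, everything else is routine integration by parts and Bishop-Gromov.
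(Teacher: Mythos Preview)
Your argument is correct and follows the same overall strategy as the paper: rewrite the Gaussian integral radially via coarea, factor the resulting ratio into a piece converging to $\theta_h$ times a piece governed by the asymptotic $h(rv)/h(r)\to v$, pass the limit inside, and identify the answer with the polar form of $P(t)$. The technical packaging differs slightly: the paper first rewrites $\theta_h$ via L'H\^opital as a limit of $|\partial B_r(o)|/(N\omega_N h^{N-1}(r))$ and then works directly with the area function $|\partial B_{rs}(o)|$, whereas you integrate by parts once to work with the volume $V(ur)$ and integrate by parts once more at the end to recover $P(t)$. Your route has the advantage that the interchange of limit and integral is fully justified by an explicit dominating function (using $V(ur)\le \omega_N e^{(N-1)b_0}(ur)^N$ and $h(r)\ge r$), while the paper simply invokes ``monotone convergence'' for the integrand $\frac{|\partial B_{rs}(o)|}{h^{N-1}(rs)}\cdot\frac{h^{N-1}(rs)}{h^{N-1}(r)}$, whose monotonicity in $r$ is not obvious since the two factors move in opposite directions.
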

This was proved in  \cite{DongLinLu2}, we are giving the same proof of this statement also here for completeness:
\begin{proof}
    First notice that we can apply De L'Hopital's rule to the limit of $\theta_h$ and obtain
    \begin{align*}
        \theta_h=\lim_{r\rightarrow\infty}\frac{\mathcal|B_r(o)|}{N\omega_N\int_0^rh^{N-1}(t)dt}=\lim_{r\rightarrow\infty}\frac{|\partial B_r(o)|}{N\omega_Nh^{N-1}(r)}
    \end{align*}
Now we can compute
\begin{align*}
    \lim_{r \to \infty}\frac{(4 \pi )^{-\frac{N}{2}}}{rh(r)^{N-1}} \int_M e^{- \frac{\left(\frac{d(x,o)}{r}+t\right)^2}{4}} dV (x)&= \lim_{r \to \infty}(4 \pi )^{-\frac{N}{2}}\int_0^\infty\frac{|\partial B_{rs}(o)|}{h^{N-1}(r)}e^{-\frac{(s+t)^2}{4}}ds\\
     &= \lim_{r \to \infty}(4 \pi )^{-\frac{N}{2}}\int_0^\infty\frac{|\partial B_rs(o)|}{h^{N-1}(rs)} \frac{h^{N-1}(rs)}{h^{N-1}(r)}  e^{-\frac{(s+t)^2}{4}}ds\\
     &=(4 \pi )^{-\frac{N}{2}}N \omega_N \theta \int_0^\infty s^{N-1}e^{-\frac{(s+t)^2}{4}}ds\\
     &= \theta (4 \pi )^{-\frac{N}{2}}\int_{\mathbb{R}^n}e^{-\frac{(|x|+t)^2}{4}}dx\\
     &=P(t)\theta,
\end{align*}
    where we used the monotone convergence theorem when we pass the limit under the integral sign.
\end{proof}
For the next lemma, we aim to substitute the base point $o$ on the right-hand side of inequality \eqref{eq:asymptest} with $\psi(x)$, where $\psi$ is a Borel map whose image is compact. 
\begin{lemma}\label{lem:theta-limitASYMP4}
    Let $M$ be a complete non-compact Riemannian manifold of dimension $n$ with asymptotic non-negative intermediate Ricci curvature.
    Then we have
    $$\lim_{r \to \infty}\frac{(4 \pi )^{-\frac{N}{2}}}{rh(r)^{N-1}}  \int_M e^{- \frac{\left(\frac{d(x,\psi(x))}{r}+t\right)^2}{4}} dV (x)=\lim_{r \to \infty}\frac{(4 \pi )^{-\frac{N}{2}}}{rh(r)^{N-1}}  \int_M e^{- \frac{\left(\frac{d(x,o)}{r}+t\right)^2}{4}} dV (x),$$
    where $\psi:M\to K$ is any Borel map where $K\subset M$ a compact subset.
\end{lemma}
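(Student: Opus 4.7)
The key observation is that since $\psi(M) \subset K$ with $K \subset M$ compact, the constant $C := \sup_{y \in K} d(o, y)$ is finite, and the triangle inequality yields
\begin{equation*}
|d(x, \psi(x)) - d(x, o)| \le d(\psi(x), o) \le C
\end{equation*}
for every $x \in M$. This is the essential input that will let us sandwich the $\psi$-integrand by two translates of the $o$-integrand.

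Outside the ball $B_C(o)$ we have $d(x, o) \ge C$, so each of the quantities $d(x, o)/r + t - C/r$, $d(x, \psi(x))/r + t$, and $d(x, o)/r + t + C/r$ is nonnegative; squaring the resulting chain of inequalities and exponentiating produces the pointwise bound
\begin{equation*}
e^{-(d(x,o)/r + t + C/r)^2/4} \le e^{-(d(x,\psi(x))/r + t)^2/4} \le e^{-(d(x,o)/r + t - C/r)^2/4}.
\end{equation*}
The portion of each integral over $B_C(o)$ is bounded by $|B_C(o)|$ and hence vanishes after dividing by $rh(r)^{N-1} \to \infty$. It therefore suffices to show that the rescaled integrals over $M$ of the two outer exponentials both converge to $P(t)\theta_h$, which by Lemma~\ref{lem:theta-limitASYMP3} is the limit on the right-hand side.

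This is a mild extension of Lemma~\ref{lem:theta-limitASYMP3}: passing to polar coordinates (coarea) turns each such integral into
\begin{equation*}
(4\pi)^{-N/2} \int_0^\infty \frac{|\partial B_{rs}(o)|}{h(r)^{N-1}}\, e^{-(s + t \pm C/r)^2/4}\, ds,
\end{equation*}
whose integrand converges pointwise for each $s>0$ to $N\omega_N \theta_h \, s^{N-1} e^{-(s+t)^2/4}$, exactly as in the proof of Lemma~\ref{lem:theta-limitASYMP3}. The main obstacle is producing an $r$-independent integrable dominator so as to justify passing the limit inside the integral: Zhu's Bishop--Gromov ratio bound controls $|\partial B_{rs}(o)|/h(rs)^{N-1}$ uniformly, the estimate $r \le h(r) \le e^{b_0}r$ dominates $h(rs)^{N-1}/h(r)^{N-1}$ by $e^{(N-1)b_0}s^{N-1}$, and the perturbed Gaussian $e^{-(s + t \pm C/r)^2/4}$ is bounded by $1$ on any fixed compact $s$-range and by $e^{-(s+t)^2/8}$ for $s$ beyond an $r$-independent threshold. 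Dominated convergence then collapses both sandwich limits onto $P(t)\theta_h$, completing the proof.
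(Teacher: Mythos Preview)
Your proof is correct and follows the approach the paper indicates: the paper's ``proof'' is only the one-line remark that the lemma is proven, as in Lemma~2.2 of Yi--Zheng, by the triangle inequality, and your argument fleshes out exactly that route---sandwiching via $|d(x,\psi(x))-d(x,o)|\le C$ and then showing both sandwich bounds converge to $P(t)\theta_h$ by a mild variant of Lemma~\ref{lem:theta-limitASYMP3}. One minor remark: the paper (in the proof of Lemma~\ref{lem:theta-limitASYMP3}) invokes monotone convergence to pass the limit inside the coarea integral, whereas you supply an explicit dominator and use dominated convergence; your justification is in fact cleaner, since the monotonicity of the full integrand in $r$ is not obvious.
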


The above lemma is proven, similar to the proof of Lemma 2.2 in \cite{Yi-Zheng}, by utilizing the triangle inequality. 

\section{Proof of Theorem \ref{thm:main-logASYMP}}
In this section, we prove log Sobolev inequality with non-negative asymptotic intermediate Ricci curvature by following the papers 
\cite{dong-lin-lu,DongLinLu2,Yi-Zheng}.

Let $M^{n+m}$ be a complete non-compact manifold of dimension $n+m$ with asymptotically non-negative $\mathrm{Ric}_{k}$ and Euclidean volume growth, where $k= \min \{n-1, m-1\}$. Assume that $\Sigma^n$ is an $n$-dimensional compact submanifold without a boundary and let $f$ be any positive smooth function.

We first assume that $\Sigma$ is connected, which is needed for the existence of a solution to a differential equation.
The inequality \eqref{ineq:log-sobolevASYMP} is invariant when we scale a function $f$. Thus, by scaling, we may assume that
\begin{equation}\label{eq:scalingAsymp}
    \int_\Sigma f \log f dV  - \int_\Sigma \frac{|\nablas f|^2}{f} dV - \int_\Sigma f |H|^2 d V = 0.
\end{equation}
Indeed, we can scale the function $f$ to satisfy the equation \eqref{eq:scalingAsymp} because $\log (cf)$ can be any real number by choosing a suitable constant $c$.
The left-hand-side of equation \eqref{eq:scalingAsymp} comes from the inequality \eqref{ineq:log-sobolevASYMP}.
Thus, it suffices to show that
\begin{align*}
    \int_\Sigma &f\left( \log P(4b_1 n ) \theta_h + \frac{n}{2}\log (4 \pi) +n  + 4b_1^2 n^2 + (n+m-1) \log \frac{1+b_0}{e^{2r_0b_1 + b_0}}\right)dV\\
    &\leq \left(\int_\Sigma f dV\right)\log \left(\int_\Sigma f dV\right).
\end{align*}

In order to prove Isoperimetric or Sobolev inequalities using the ABP method (see \cite{Cabre-survey} and related literature), we need to consider a suitable PDE on a submanifold $\Sigma$. To apply our assumption \eqref{eq:scalingAsymp}, we consider a differential equation as follows:
\begin{equation}\label{eq:AsympPDE}
\mathrm{div}_\Sigma (f \nablas u) = f \log f - \frac{|\nablas f |^2 }{f} - f | H|^2 \mbox{ on } \Sigma,
\end{equation}
where $\nabla^\Sigma$ is the induced Levi-Civita connection on $\Sigma$.
We do not need t 
 o specify a boundary condition since  $\partial \Sigma = \emptyset$.
Using standard PDE theory (see \cite{Trud} Chapter 6), since $f$ is a positive function, the operator $u \mapsto \mathrm{div}_\Sigma (f \nablas u)$ has Fredholm index $0$. Hence, we can find a smooth solution $u: \Sigma \to \RR$.

We define a contact set $A_r$ as the set of all points $(\bar x , \bar y) \in T^\bot\Sigma$ satisfying
\begin{align}\label{ineq:asymp-contact-set}
    r u(x) + \frac{1}{2} d(x, \exp_{\bar x} ( r \nablas u (\bar x) +  r\bar y))^2 \geq  ru (\bar x) + \frac{1}{2} r^2(|\nablas u(\bar x )|^2 + |\bar y|^2)
\end{align}
for all $x \in \Sigma$ as  introduced in \cite{BrendleF}. This is a natural extension of the Cabrè's idea \cite{Cabre} to the submanifold case.

Let $\Phi_t : T^\bot \Sigma \to M$ be a map defined by
$$\Phi_t (x,y) = \exp_x (t \nablas u (x) + t y).$$
Then we have the following lemma, for which we will recall the proof for the reader's convenience.

\begin{lemma}[Lemma 3.2, \cite{Yi-Zheng}]\label{lem:asymp-whole-cover}
For each $r \in (0, \infty)$, we have $\Phi_r (A_r) = M$.
\end{lemma}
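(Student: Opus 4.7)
\noindent
The plan is to prove surjectivity of $\Phi_r$ restricted to $A_r$ by a variational argument: for any fixed $p \in M$, I will exhibit a pair $(\bar{x},\bar{y}) \in T^\bot \Sigma$ that both maps to $p$ under $\Phi_r$ and lies in the contact set. To that end, define the continuous function
\begin{equation*}
F(x) = r u(x) + \tfrac{1}{2}\,d(x,p)^2, \qquad x \in \Sigma.
\end{equation*}
Since $\Sigma$ is compact, $F$ attains a minimum at some $\bar{x} \in \Sigma$. Fix a minimizing geodesic $\sigma\colon [0,1] \to M$ from $\bar{x}$ to $p$, let $v = \sigma'(0) \in T_{\bar{x}}M$ (so $\exp_{\bar{x}}(v) = p$ and $|v| = d(\bar{x},p)$), and decompose $v = v^T + v^\bot$ into its $T_{\bar{x}}\Sigma$ and $T^\bot_{\bar{x}}\Sigma$ parts. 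Setting $\bar{y} = v^\bot / r$, one obtains $\Phi_r(\bar{x},\bar{y}) = \exp_{\bar{x}}(v) = p$ as soon as the first-order identity $r\nablas u(\bar{x}) = v^T$ is in hand.

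\noindent
Given that identity, the contact condition is automatic: $d(\bar{x},p)^2 = |v|^2 = r^2(|\nablas u(\bar{x})|^2 + |\bar{y}|^2)$ together with the minimality $F(\bar{x}) \le F(x)$ rearranges exactly to \eqref{ineq:asymp-contact-set} for every $x \in \Sigma$, so $(\bar{x},\bar{y}) \in A_r$ and $\Phi_r(\bar{x},\bar{y}) = p$. Since $p \in M$ was arbitrary, this yields $\Phi_r(A_r) = M$.

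\noindent
The subtle step, which I expect to be the main technical obstacle, is the first-order identity when $\bar{x}$ lies in the cut locus of $p$: there $d(\cdot,p)^2$ need not be differentiable and one cannot simply impose $\nablas F(\bar{x}) = 0$. I would handle this with an upper-barrier trick. Pick any $q = \sigma(s_0)$ with $s_0 \in (0,1)$; as an interior point of the minimizing geodesic issuing from $\bar{x}$, the point $q$ is not a cut point of $\bar{x}$, and by the symmetry of the cut locus relation $\bar{x}$ is not a cut point of $q$. Hence $d(\cdot,q)$ is smooth in a neighborhood of $\bar{x}$ with $\nabla d(\cdot,q)(\bar{x}) = -v/|v|$. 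The function
\begin{equation*}
G(x) = r u(x) + \tfrac{1}{2}\bigl(d(x,q) + d(q,p)\bigr)^2
\end{equation*}
is smooth near $\bar{x}$, dominates $F$ on $\Sigma$ by the triangle inequality, and agrees with $F$ at $\bar{x}$; thus $\bar{x}$ is also a local minimizer of $G$. Computing the tangential gradient yields $\nablas G(\bar{x}) = r\nablas u(\bar{x}) - v^T = 0$, which is precisely the identity required, and the rest of the argument then proceeds as above.
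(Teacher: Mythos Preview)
Your argument is correct and shares the paper's overall strategy: minimize $F(x)=ru(x)+\tfrac12 d(x,p)^2$ over the compact $\Sigma$ and read off a first-order condition to produce $(\bar x,\bar y)$. The difference is in how that first-order condition is obtained. The paper reparametrizes the minimizing geodesic on $[0,r]$ and observes that it minimizes the energy-type functional $\gamma\mapsto u(\gamma(0))+\tfrac12\int_0^r|\gamma'|^2\,dt$ among curves from $\Sigma$ to $p$; the first variation formula on path space then gives $\nablas u(\bar x)-\bar\gamma'(0)\in T_{\bar x}^\bot\Sigma$ directly, and since the energy is smooth in the curve there is no cut-locus issue to address. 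You instead work pointwise on $\Sigma$ and handle the possible non-differentiability of $d(\cdot,p)^2$ with a Calabi-type upper barrier through an interior point $q$ of the minimizing segment. Both devices are standard; the energy-functional route is a bit slicker and case-free, while your barrier argument is more elementary in that it avoids any variational calculus on path space and makes the cut-locus concern explicit. One small gap worth noting: your barrier step tacitly assumes $\bar x\neq p$ (otherwise there is no interior point $q$ to choose), but in that degenerate case $d(\cdot,p)^2$ is smooth at $\bar x=p$ with vanishing gradient, so the direct computation already yields $\nablas u(\bar x)=0=v^T$.
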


\begin{proof}
Take any $p \in M$. Since $\Sigma$ is compact, there is $\bar x \in \Sigma$ where the function $x \mapsto  ru(x) + \frac{1}{2} d(x,p)^2$ attains its minimum.
Let $\bar \gamma$ be a minimizing geodesic such that $\bar \gamma(0) = \bar x$ and $\bar \gamma (r) = p$.
Then the geodesic $\bar \gamma$ minimizes the functional $u(\gamma(0)) + \frac{1}{2} \int_0^r |\gamma'(t)|^2 dt$ among all smooth curves $\gamma$ with  $\gamma(0) \in \Sigma$ and $\gamma(r) =p$. 
By the first variational formula, we have
$$\nablas u( \bar x) - \bar \gamma'(0) \in T_{\bar x}^\bot \Sigma.$$
That is, there is $\bar y \in T_{x}^\bot \Sigma$ satisfying
$$\nablas u (\bar x ) + \bar y = \bar \gamma'(0).$$
We can easily check that $\Phi_r(\bar{x}, \bar{y}) = p$ and $(\bar{x}, \bar{y}) \in A_r$ using the geodesic $\bar \gamma$, which implies that $p \in \Phi_r(A_r)$.
\end{proof}

As we see in the above proof, we can define a map $\psi:M \to \Sigma$ as $\psi(p)$ is the point where the function $ru(x) + \frac{1}{2} d(x,p)^2$ attains its minimum. Then  by Lemma \ref{lem:asymp-whole-cover}, we have
\begin{align}\label{ineq:asymp-temp1}
    \int_M e^{-\left( \frac{d (\psi(p),p)}{2r}+2b_1n\right)^2} \, dV (p) 
    &\leq \int_M \left(\int_{\{(x,y)\in A_r|\Phi_r(x,y) = p\}} e^{-\left(\frac{d(x, \Phi_r(x,y))}{2r} + 2b_1n \right)^2}d \mathcal{H}^0\right) d V (p),
\end{align}
where $\mathcal{H}^0$ is the counting measure.
The left hand side of the inequality \eqref{ineq:asymp-temp1} is related to the asymptotic volume ratio $\theta_h$ as we showed in Lemma \ref{lem:theta-limitASYMP3}.
Also, we apply Area formula with the map $\Phi_r: A_r \to M$ to the right hand side of \eqref{ineq:asymp-temp1}. That is,
\begin{align}\label{ineq:asymp-temp2}
    \int_M &\left(\int_{\{(x,y)\in A_r|\Phi_r(x,y) = p\}} e^{-\left(\frac{d(x, \Phi_r(x,y))}{2r} + 2b_1n \right)^2}d \mathcal{H}^0\right) d V (p)\nonumber\\
    &= \int_{A_r} e^{-\left(\frac{d(x, \Phi_r(x,y))}{2r} + 2b_1n \right)^2} |\mathrm{det} D\Phi_r(x,y)| d V (x,y)\nonumber\\
    & = \int_\Sigma \left(\int_{T_x^\bot \Sigma} e^{-\left(\frac{d(x, \Phi_r(x,y))}{2r} + 2b_1n \right)^2} |\mathrm{det} D\Phi_r(x,y)| \chi_{A_r}(x,y) \, dy\,\right) d V (x).
\end{align}

We now want to estimate the integrand in the right hand side of \eqref{ineq:asymp-temp2}. Take any $ r >0$ and  $(\bar{x},\bar y )$ in $A_r$. Define a geodesic $\bar \gamma(t) = \exp_{\bar x}(t \nablas u(\bar x) + t \bar y)$ on $[0,r]$. Let $\{e_1,\ldots, e_n\}$ be an orthonormal basis of $T_{\bar x}\Sigma$ such that $\mathrm{Hess}_\Sigma u (e_i, e_j) - \langle II(e_i, e_j), \bar y\rangle$ is diangonal. Let us denote $E_i$ by a parallel transport vector field of $e_i$ along $\bar \gamma$ for each $1 \leq i\leq n$.
Take any orthonormal frame $\{e_{n+1},\ldots, e_{n+m}\}$ of $T^\bot \Sigma$ near $\bar x$ such that $\langle \nabla_{e_i} e_\alpha , e_\beta \rangle = 0$ at $\bar x$ for all $ 1 \leq i \leq n$ and $ n+1 \leq \alpha, \beta \leq n+m$.

For each $1 \leq i \leq n$, let us consider a Jacobi field $X_i(t)$ along $\bar \gamma(t)$ with the initial conditions
$$\begin{cases}
    X_i(0) = e_i\\
    \langle X_i' (0) , e_j \rangle = \mathrm{Hess}_\Sigma u (e_i, e_j) - \langle II(e_i, e_j) , \bar y \rangle \quad \mbox{ for all } 1 \leq j \leq n\\
    \langle X_i'(0) , e_\alpha \rangle = \langle II (e_i, \nablas u (\bar x )) , e_\alpha \rangle
    \quad \mbox{ for all } n+1 \leq \alpha \leq n+m.
\end{cases}$$
For each $n+1 \leq \alpha \leq n+m$, consider a Jacobi field $X_\alpha (t)$ along $\bar \gamma(t)$ with the initial conditions
$$
    X_\alpha(0) = 0 \quad \mbox{and} \quad
    X_\alpha' (0) = e_\alpha.
$$

Define $(n+m)\times (n+m)$ matrices $P(t)$ and $S(t)$ satisfying $P_{ij}(t) = \langle X_i (t) , E_j(t) \rangle$ and
$S_{ij}(t) = \bar R (\bar \gamma '(t) , E_i(t),\bar \gamma'(t),E_j(t))$ for $ 1 \leq i, j \leq n+m$.
Since $X_i(t)$ is a Jacobi field along $\bar \gamma (t)$, we have $P''(t)= - P(t) S (t)$.
Define $Q(t) = P^{-1}(t)P'(t)$. Then 
$$Q'(t) = - (Q(t))^2 -S(t).$$
Let $tr_n Q(t) = \sum_{i=1}^{n}Q_{ii}(t)$ and $tr_m Q(t) = \sum_{\alpha=n+1}^{n+m} Q_{\alpha \alpha}(t)$.
Then
$$(tr_n Q(t))' + \frac{1}{n} (tr_n Q(t))^2 \leq - tr_n S(t)$$
and
$$(tr_m Q(t))' + \frac{1}{m} (tr_m Q(t))^2 \leq - tr_m S(t),$$
where $tr_n S(t)$ and $tr_m S(t)$ are defined as a partial trace of $S(t)$.
Let us note that
$$tr_nS(t) = \sum_{i=1}^n \bar R (\bar \gamma'(t), E_i(t), \bar \gamma'(t), E_i(t)).$$
However, we cannot apply curvature assumption directly because $\bar \gamma'(t)$ may not be perpendicular to the plane spanned by $E_1(t), \ldots, E_n(t)$. Since $E_i(t)$ is a parallel vector field along $\bar \gamma(t),$ it is enough to consider the angle at $t =0$, denoted by $a$,  between the vector $\bar \gamma'(0)$ and the tangent plane $T_{\bar x } \Sigma$.
Since $\bar \gamma'(0) = \nablas u (\bar x ) + \bar y$, depending on the vectors $\nablas u (\bar x )$ and $ \bar y$ the angle is determined.

Let us consider the case $\nablas u ( \bar x ) \neq 0$ and $\bar y \neq 0$.
In this case, $\bar \gamma'(0)$ is not perpendicular to the tangent plane. After we change the basis as in \cite{ma-wu}, we get
    \begin{align}\label{ineq:trn}
        tr_n S(t)
        & = \sin^2 (a)|\bar \gamma'(0)|^2 Ric_n\left(\frac{\bar \gamma'(t)}{|\bar \gamma'(t)|}, P_1(t)\right) + \cos^2(a)|\bar \gamma'(0)|^2
        Ric_{n-1}\left(\frac{\bar \gamma'(t)}{|\bar \gamma'(t)|}, P_2(t)\right)\nonumber\\
        & \geq -n\sin^2 (a) |\bar \gamma '(0)|^2  \lambda(d(o, \bar \gamma(t))) - (n-1) \cos^2 (a) |\bar \gamma'(0)|^2 \lambda (d( o, \bar \gamma(t)))\nonumber\\
        &=  (\cos^2 ( a) - n)  |\bar \gamma'(0)|^2\lambda(d(o, \bar \gamma(t))),
    \end{align}
    where $P_1(t)$ is an $n$-dimensional subspace spanned by $E_2(t), \ldots, E_{n+1}(t)$ and $P_2(t)$ is an $(n-1)$-dimensional subspace spanned by $E_2(t),\ldots, E_n(t)$.
    Similarly, we have
\begin{align}\label{ineq:trm}
    tr_mS(t)  \geq (\sin^2 (a) - m) |\bar \gamma'(0)|^2 \lambda (d(o,\bar \gamma(t))).
\end{align}
Indeed, the inequalities \eqref{ineq:trn} and \eqref{ineq:trm} hold for the case $\nablas u (\bar x) =0$ or $\bar y =0$. More specifically, for the case $\nablas u (\bar x) = 0$,
since $\bar \gamma' ( 0 ) = \bar y \in T_{\bar x}^\bot\Sigma$, the angle $a = \frac{\pi}{2}$ and we have
\begin{align*}
    tr_n S(t) 
    &= |\bar \gamma'(0)|^2Ric_n(P_3(t), \bar \gamma'(t))\\
    &\geq -n |\bar \gamma'(0)|^2\lambda (d(o, \bar \gamma(t))) \\
    &= (\cos^2 ( a) - n)|\bar \gamma'(0)|^2   \lambda(d(o, \bar \gamma(t)))
\end{align*}
and
\begin{align*}
    tr_m S(t) &= \sum_{\alpha = n+1}^{n+m} \bar R (\bar \gamma'(t) , E_\alpha(t), \bar \gamma'(t) , E_\alpha(t))\\
    &= \sum_{\alpha=n+2}^{n+m} \bar R (\bar \gamma'(t) , E_\alpha(t), \bar \gamma'(t) , E_\alpha(t)) \\
    &= |\bar \gamma'(0)|^2 Ric_{m-1}(P_4 (t), \bar \gamma'(t)) \\
    &\geq -(m-1) |\bar \gamma'(0)|^2\lambda (d(o, \bar \gamma'(t)))\\
    & \geq (\sin^2 (a) - m) |\bar \gamma'(0)|^2 \lambda (d(o,\bar \gamma(t))),
\end{align*}
where $P_3(t) = Span (E_1(t), \ldots, E_n(t))$ and $P_4 (t) = Span (E_{n+2}(t), \ldots, E_{n+m}(t))$
with the assumption $E_{n+1} = \frac{\bar y}{|\bar y|}$. 
In the same manner, for the case $\bar y = 0$, since
$\bar \gamma'(0) = \nablas u(\bar x) \in T_{\bar x} \Sigma$, by assuming $E_{1}(t) = \frac{\bar \gamma'(t)}{|\bar \gamma'(t)|}$, we can verify that the same inequalities \eqref{ineq:trn} and \eqref{ineq:trm} hold.
In all, for $(\bar x, \bar y) \in A_r \setminus \{(0,0)\}$, we obtain
$$
\begin{cases}
    (tr_nQ(t))' + \frac{1}{n}(tr_nQ(t))^2  \leq (n-\cos^2 (a) )  |\bar \gamma'(0)|^2\lambda (d(o,\bar \gamma(t)))\\
    (tr_mQ(t))' + \frac{1}{m}(tr_mQ(t))^2  \leq (m-\sin^2 (a))|\bar \gamma'(0)|^2\lambda (d(o,\bar \gamma(t)))
\end{cases}$$
by combining inequalities \eqref{ineq:trn}  and \eqref{ineq:trm} with Riccati equation.
It follows from the triangle inequality
$d(o, \bar \gamma(t)) \geq |d(o, \bar x ) - d( \bar x , \bar \gamma(t))|$ and the definition of $A_r$ that
$$\begin{cases}
    (tr_nQ(t))' + \frac{1}{n}(tr_nQ(t))^2  \leq (n-\cos^2(a)) |\bar \gamma'(0)|^2
    \lambda\left(\left|d ( o, \bar x ) - t \sqrt{|\nablas u ( \bar x ) |^2 + |\bar y |^2}\right|\right)\\
    (tr_mQ(t))' + \frac{1}{m}(tr_mQ(t))^2  \leq (m-\sin^2(a))|\bar \gamma'(0)|^2\lambda\left(\left|d ( o, \bar x ) - t \sqrt{|\nablas u ( \bar x ) |^2 + |\bar y |^2}\right|\right).
\end{cases}$$

Let $\phi(t) = e^{\frac{1}{n}\int_0^t \sum_{i=1}^n Q_{ii}(\tau ) d \tau}$.
    Then
     $$
    \begin{cases}
        \phi''(t) \leq \frac{n-\cos^2 (a)}{n}|\bar \gamma'(0)|^2\lambda\left(\left|d ( o, \bar x ) - t \sqrt{|\nablas u ( \bar x ) |^2 + |\bar y |^2}\right|\right) \,\phi(t)\\
        \phi'(0) = \frac{1}{n} ( \Delta_{\Sigma} u (\bar x) - \langle H (\bar x) , \bar y \rangle )\\
        \phi(0) =1.
    \end{cases}
    $$
To use comparison theorems with $\phi (t)$, let us define $\psi_1 (t)$ and $\psi_2(t)$ as a solution to each PDE similarly to \cite{dong-lin-lu}:
$$\begin{cases}
    \psi_1''(t) =  \frac{n-\cos^2 (a)}{n}|\bar \gamma'(0)|^2\lambda\left(\left|d ( o, \bar x ) - t \sqrt{|\nablas u ( \bar x ) |^2 + |\bar y |^2}\right|\right) \,\psi_1(t)\\
    \psi_1(0) = 0,  \psi_1'(0) = 1
\end{cases}
$$
and
$$
\begin{cases}
    \psi_2''(t) =  \frac{n-\cos^2 (a)}{n}|\bar \gamma'(0)|^2\lambda\left(\left|d ( o, \bar x ) - t \sqrt{|\nablas u ( \bar x ) |^2 + |\bar y |^2}\right|\right) \,\psi_2(t)\\
    \psi_2(0) = 1,  \psi_2'(0) = 0.
\end{cases}$$
Then we see that a function $\psi(t) = \psi_2(t) + \frac{1}{n} ( \Delta_{\Sigma} u (\bar x) - \langle H (\bar x) , \bar y \rangle ) \psi_1(t)$ satisfies the following PDE.
$$
    \begin{cases}
        \psi''(t) = \frac{n-\cos^2 (a)}{n}|\bar \gamma'(0)|^2\lambda\left(\left|d ( o, \bar x ) - t \sqrt{|\nablas u ( \bar x ) |^2 + |\bar y |^2}\right|\right) \,\psi(t)\\
        \psi'(0) =  \frac{1}{n} ( \Delta_{\Sigma} u (\bar x) - \langle H (\bar x) , \bar y \rangle )\\
        \psi(0) =1.
    \end{cases}
    $$
By comparing $\frac{\phi'(t)}{\phi(t)}$ with $\frac{\psi'(t)}{\psi(t)}$, we have
    $$tr_n Q(t) = n \cdot \frac{\phi'(t)}{\phi(t)} \leq n \cdot \frac{\psi'(t)}{\psi(t)} = \frac{\psi_2' +\frac{1}{n} ( \Delta_{\Sigma} u (\bar x) - \langle H (\bar x) , \bar y \rangle )  \psi_1' }{\psi_2 + \frac{1}{n} ( \Delta_{\Sigma} u (\bar x) - \langle H (\bar x) , \bar y \rangle )  \psi_1}.$$
Define $$\bar \phi(t) = t e^{\frac{1}{m} \int_0^t \sum_{\alpha =n+1}^{n+m} (Q_{\alpha \alpha}(t) - \frac{1}{t} ) d\tau}.$$
Then
$$\begin{cases}
    \bar \phi'' \leq \frac{m- \sin^2 (a)}{m}|\bar \gamma'(0)|^2\lambda\left(\left|d(o,\bar x ) - t \sqrt{|\nablas u(\bar x)|^2 + |\bar y |^2})\right|\right) \, \bar \phi\\
    \bar \phi(0) = 0, \quad 
    \bar \phi'(0) = 1.
\end{cases}$$
To estimate $\frac{\bar \phi'}{\bar \phi}$, let us introduce a $C^2$ solution $\bar \psi$ to the PDE
$$\begin{cases}
    \bar \psi'' = \frac{m- \sin^2 (a)}{m}|\bar \gamma'(0)|^2\lambda\left(\left|d(o,\bar x ) - t \sqrt{|\nablas u(\bar x)|^2 + |\bar y |^2})\right|\right) \, \bar \psi\\
    \bar \psi(0) = 0, \quad 
    \bar \psi'(0) = 1.
\end{cases}$$
Thus,
$$tr_mQ(t) = m \cdot \frac{\bar \phi'}{\bar \phi} \leq m \cdot \frac{\bar \psi'}{\bar \psi}.$$

Since
$\frac{d}{dt} (\det P(t)) = \det P(t) \tr Q(t),$
we have
$$\frac{d}{dt} \log (\det P(t)) = tr Q(t) \leq n\cdot \frac{\psi'}{\psi } + m \cdot \frac{\bar \psi' }{\bar \psi} = \frac{d}{dt} (n\log (\psi) + m \log (\bar \psi)) = \frac{d}{dt} \log (\psi^n \bar \psi^m).$$

By integrating the above inequality from $\epsilon$ to $t$ and $\epsilon \to 0$, we get
\begin{align*}
    \det P(t) \leq \psi^n(t) \bar \psi^m(t) &= \left(\psi_2(t) + \frac{1}{n} ( \Delta_{\Sigma} u (\bar x) - \langle H (\bar x) , \bar y \rangle ) \psi_1(t)\right)^n \bar \psi^m(t)\\
    &= \left(\frac{\psi_2(t)}{\psi_1(t)} + \frac{1}{n} (\Delta_{\Sigma} u (\bar x) - \langle H (\bar x) , \bar y \rangle )  \right)^n \psi_1^n(t) \bar \psi^m (t)
\end{align*}
Moreover, comparison theorems imply those inequalities
$$\begin{cases}
    \frac{\psi_2}{\psi_1} \leq \frac{2b_1(n-\cos^2(a))}{n} \sqrt{|\nablas u ( \bar x ) |^2 + |\bar y |^2}  + \frac{1}{t}\\
    \psi_1(t) 
    \leq  t e^{\frac{n- \cos^2 (a)}{n}(2b_1 d(o, \bar x ) + b_0)}\\
    \bar \psi (t) \leq t e^{\frac{m- \sin^2 (a)}{m}(2b_1 d(o, \bar x ) + b_0)}.
\end{cases}$$
Thus,
\begin{align}\label{lem:asymp-estimate}
    &\det P(t) \nonumber\\
    &\leq \left(\frac{\psi_2(t)}{\psi_1(t)} + \frac{1}{n} (\Delta_{\Sigma} u (\bar x) - \langle H (\bar x) , \bar y \rangle )  \right)^n \psi_1^n(t) \bar \psi^m (t)\nonumber\\
    &\leq \left(\frac{2b_1(n - \cos^2 (a))}{n} \sqrt{|\nablas u ( \bar x ) |^2 + |\bar y |^2}  + \frac{1}{t}+ \frac{1}{n} (\Delta_{\Sigma} u (\bar x) - \langle H (\bar x) , \bar y \rangle )  \right)^n \nonumber\\
    & \quad \cdot t^n e^{(n- \cos^2 (a))(2b_1 d(o, \bar x ) + b_0)} \cdot t^m e^{(m- \sin^2 (a))(2b_1 d(o, \bar x ) + b_0)}\nonumber\\
    & \leq \left(2b_1 \sqrt{|\nablas u ( \bar x ) |^2 + |\bar y |^2}  + \frac{1}{t}+ \frac{1}{n} (\Delta_{\Sigma} u (\bar x) - \langle H (\bar x) , \bar y \rangle )  \right)^n t^{n+m}e^{(n+m-1)(2b_1 d( o,\bar x)+b_0)}\nonumber\\
    &\leq \left(2b_1\sqrt{|\nablas u ( \bar x ) |^2 + |\bar y |^2}  + \frac{1}{t}+ \frac{1}{n} (\Delta_{\Sigma} u (\bar x) - \langle H (\bar x) , \bar y \rangle )  \right)^n t^{n+m}e^{(n+m-1)(2b_1 r_0+b_0)},
\end{align}

The following lemma can be also found in the proof of Lemma 3.7 in \cite{Yi-Zheng}.
\begin{lemma}\label{lem:asymp:laplacian} 
For $(x,y) \in A_r$, we have:
\begin{equation*}\label{asymplap}
    \Delta_\Sigma u (x) - \langle H(x) , y \rangle\leq \log f(x)+ \frac{|\nabla^\Sigma u|^2 + |y|^2}{4}- \frac{|2H(x)+y|^2}{4}.
\end{equation*}
\end{lemma}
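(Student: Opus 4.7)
The plan is to derive the inequality directly from the defining PDE \eqref{eq:AsympPDE} for $u$ by completing the square, without needing to invoke the contact condition $(x,y)\in A_r$ in any essential way (the membership just fixes the normal vector $y$ at which we evaluate). First, I would expand the divergence on the left-hand side of \eqref{eq:AsympPDE} as
$$\operatorname{div}_\Sigma(f\,\nablas u) = f\,\Delta_\Sigma u + \langle \nablas f,\nablas u\rangle,$$
and then divide the equation through by the positive function $f$. Using the identities $\nablas f/f = \nablas\log f$ and $|\nablas f|^2/f^2 = |\nablas\log f|^2$, this rewrites \eqref{eq:AsympPDE} as the pointwise identity
$$\Delta_\Sigma u + \langle \nablas\log f,\nablas u\rangle \;=\; \log f - |\nablas\log f|^2 - |H|^2.$$

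Second, I would isolate $\Delta_\Sigma u$ and complete the square in the gradient terms. From the algebraic identity
$$|\nablas\log f|^2 + \langle \nablas\log f,\nablas u\rangle \;=\; \Bigl|\nablas\log f + \tfrac{1}{2}\nablas u\Bigr|^2 - \tfrac{1}{4}|\nablas u|^2,$$
the rewritten PDE becomes
$$\Delta_\Sigma u \;=\; \log f - |H|^2 + \tfrac{1}{4}|\nablas u|^2 - \Bigl|\nablas\log f + \tfrac{1}{2}\nablas u\Bigr|^2.$$
Discarding the non-positive last term gives the clean pointwise bound
$$\Delta_\Sigma u \;\leq\; \log f - |H|^2 + \tfrac{1}{4}|\nablas u|^2.$$

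Third, to introduce the normal vector $y \in T_x^\perp\Sigma$, I would subtract $\langle H(x),y\rangle$ from both sides and reshape the curvature terms using the expansion
$$|2H+y|^2 \;=\; 4|H|^2 + 4\langle H,y\rangle + |y|^2,$$
equivalently $-|H|^2 - \langle H,y\rangle = \tfrac{1}{4}|y|^2 - \tfrac{1}{4}|2H+y|^2$. Substituting this identity into the previous inequality immediately yields
$$\Delta_\Sigma u - \langle H,y\rangle \;\leq\; \log f + \frac{|\nablas u|^2 + |y|^2}{4} - \frac{|2H+y|^2}{4},$$
which is the claim.

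There is no real obstacle here beyond the bookkeeping: the proof is essentially a two-line completion of squares applied to the PDE defining $u$. If anything warrants care, it is matching the specific grouping $\tfrac{1}{4}|2H+y|^2$ in the target inequality, which is exactly what suggests pulling $\langle H,y\rangle$ to the left and combining it with $|H|^2$ and the $|y|^2/4$ term on the right.
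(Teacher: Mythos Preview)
Your proof is correct and follows essentially the same approach as the paper: expand the divergence in \eqref{eq:AsympPDE}, divide by $f$, complete the square separately in the gradient terms and in the mean-curvature terms, and discard the non-positive square. The only cosmetic difference is that you write the completed square as $\bigl|\nablas\log f+\tfrac12\nablas u\bigr|^2$ while the paper writes it as $\frac{|2\nablas f+f\nablas u|^2}{4f^2}$, which is the same quantity; and you correctly observe that the hypothesis $(x,y)\in A_r$ plays no role in this particular estimate.
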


\begin{proof}
Since $\mathrm{div}_\Sigma (f \nablas u) = f \log f - \frac{|\nablas f |^2 }{f} - f | H|^2$, we get
$$f \Delta_\Sigma u + \langle \nablas f, \nablas u \rangle  = f \log f - \frac{|\nablas f |^2 }{f} - f | H|^2.$$
Since $f(x) \neq 0$ for all $x \in \Sigma$, we have
\begin{align}\label{eq:temp-1}
    \Delta_\Sigma u - \langle H(x) , y \rangle 
     =  \log f - \frac{|\nablas f |^2 }{f^2} -  | H(x)|^2 -\frac{\langle \nablas f , \nablas u \rangle}{f} - \langle H(x) , y\rangle.
\end{align}
Note that 
\begin{align}\label{eq:temp-2}
    \frac{|\nablas f |^2}{f^2} + \frac{\langle \nablas f , \nablas u \rangle}{f} & = \frac{4 |\nablas f |^2 + 4 \langle \nablas f, f \nablas u \rangle }{4f^2} \nonumber\\
    & = \frac{4 |\nablas f|^2 + 4 \langle \nablas f, f \nablas u \rangle + |f \nablas u |^2}{4f^2} -\frac{|f \nablas u|^2}{4f^2}\nonumber\\
    & = \frac{| 2 \nablas f+ f \nablas u |^2 }{4f^2} - \frac{|\nablas u|^2}{4}
\end{align}
and
\begin{align}\label{eq:temp-3}
    |H(x)|^2 + \langle H(x), y\rangle & = \frac{|2H(x)|^2 + \langle 4H(x), y \rangle + |y|^2}{4} - \frac{|y|^2}{4} = \frac{|2H(x) + y|^2}{4} - \frac{|y|^2}{4}.
\end{align}
Combining identities \eqref{eq:temp-1}, \eqref{eq:temp-2}, and \eqref{eq:temp-3}, we obtain
\begin{align*}
    \Delta_\Sigma u (x) &- \langle H(x) , y \rangle 
    \\& =  \log f - \frac{|\nablas f |^2 }{f^2} -  | H|^2 -\frac{\langle \nablas f , \nablas u \rangle}{f} - \langle H(x) , y\rangle \\
    & = \log f + \frac{|\nablas u (x)|^2 + |y|^2}{4} - \frac{|2 \nablas f + f \nablas u |^2}{ 4f^2} - \frac{|2H(x) + y |^2}{4}\\
    & \leq \log f + \frac{|\nablas u(x) |^2 + |y|^2}{4} - \frac{|2H(x)+y|^2}{4}.
\end{align*}
\end{proof}

\begin{lemma}\label{lem:estimate-expASYMP}
For $(\bar x,\bar y) \in A_r \setminus  \{(0,0)\}$, we have:
$$|\mathrm{det} D\Phi_t (\bar x,\bar y)| \leq e^{\alpha + \frac{n}{t} -n - 4b_1^2 n^2} t^{m+n} f\exp \left(\left(\frac{d(\bar x, \Phi_t(\bar x,\bar y))}{2t} + 2b_1n\right)^2\right) \exp \left(- \frac{|2H+\bar y|^2}{4} \right),$$
where $\alpha = (n+m-1)(2r_0b_1+b_0)$.
\end{lemma}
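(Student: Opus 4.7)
The plan is to identify $|\det D\Phi_t(\bar x, \bar y)|$ with the quantity $|\det P(t)|$ already estimated in \eqref{lem:asymp-estimate}, then combine that estimate with the Laplacian bound of Lemma \ref{lem:asymp:laplacian} and a short algebraic simplification. For the identification, note that under the parallel orthonormal frame $\{E_i(t)\}$ along $\bar\gamma$, differentiating $\Phi_t$ in the horizontal tangent directions $e_1,\dots,e_n$ and vertical normal directions $e_{n+1},\dots,e_{n+m}$ reproduces exactly the Jacobi fields $X_1,\dots,X_{n+m}$ constructed above; hence the matrix $P(t) = (\langle X_i, E_j\rangle)$ is the Jacobian matrix of $\Phi_t$ in these frames, so $|\det D\Phi_t(\bar x, \bar y)| = |\det P(t)|$.

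Next I would substitute the bound from Lemma \ref{lem:asymp:laplacian},
\[
\tfrac{1}{n}\bigl(\Delta_\Sigma u(\bar x) - \langle H(\bar x), \bar y\rangle\bigr) \le \tfrac{1}{n}\log f + \tfrac{|\nablas u|^2 + |\bar y|^2}{4n} - \tfrac{|2H(\bar x) + \bar y|^2}{4n},
\]
into the parenthesized base on the right hand side of \eqref{lem:asymp-estimate}, and use that $\bar\gamma$ is a constant-speed geodesic so $\sqrt{|\nablas u(\bar x)|^2 + |\bar y|^2} = d(\bar x, \Phi_t(\bar x, \bar y))/t$. Writing $d = d(\bar x, \Phi_t(\bar x, \bar y))$, I would then complete the square
\[
2b_1 \tfrac{d}{t} + \tfrac{d^2}{4nt^2} = \tfrac{1}{n}\Bigl(\tfrac{d}{2t} + 2b_1 n\Bigr)^2 - 4b_1^2 n,
\]
which is precisely the identity that produces the Gaussian factor $\exp((d/(2t)+2b_1 n)^2)$ in the target inequality.

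The final step is to apply the elementary inequality $A^n \le e^{n(A-1)}$ to the resulting $n$th power (valid for $A > 0$; if $A \le 0$ then the bound \eqref{lem:asymp-estimate} forces $\det P(t) \le 0$ and the claim is trivial since the right hand side of the lemma is positive). Distributing the factor of $n$ converts the five inner terms into $n/t + \log f + (d/(2t)+2b_1 n)^2 - 4b_1^2 n^2 - |2H + \bar y|^2/4$, and combining with the prefactors $t^{n+m}e^{\alpha}$ from \eqref{lem:asymp-estimate} yields the claim with the asserted exponent $\alpha + n/t - n - 4b_1^2 n^2$. The main obstacle is purely bookkeeping: tracking the factor of $n$ as it migrates through the completion of the square and the exponentiation, so that the constant $-4b_1^2 n$ inside the parenthesis becomes $-4b_1^2 n^2$ in the final exponent and the $\log f$ term cleanly exponentiates to the factor $f$.
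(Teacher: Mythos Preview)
Your proposal is correct and follows essentially the same route as the paper: identify $|\det D\Phi_t|$ with $|\det P(t)|$, feed Lemma~\ref{lem:asymp:laplacian} into \eqref{lem:asymp-estimate}, apply the elementary bound $\lambda\le e^{\lambda-1}$ (equivalently $A^n\le e^{n(A-1)}$), complete the square $2b_1 n s + s^2/4 = (s/2+2b_1 n)^2 - 4b_1^2 n^2$, and use $\sqrt{|\nablas u|^2+|\bar y|^2}=d(\bar x,\Phi_t(\bar x,\bar y))/t$. The only cosmetic difference is that you complete the square inside the base before exponentiating while the paper exponentiates first and then completes the square in the exponent; the computations are line-by-line equivalent.
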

\begin{proof}
    Combining inequality \eqref{lem:asymp-estimate} and Lemma \ref{lem:asymp:laplacian}, we obtain
    \begin{align*}
         &\det P(t) \\&\leq \left(2b_1 \sqrt{|\nablas u ( \bar x ) |^2 + |\bar y |^2}  + \frac{1}{t}+ \frac{1}{n} (\Delta_{\Sigma} u (\bar x) - \langle H (\bar x) , \bar y \rangle )  \right)^n t^{n+m}e^{\alpha}\\
         & \leq \left(2b_1\sqrt{|\nablas u ( \bar x ) |^2 + |\bar y |^2}  + \frac{1}{t}+ \frac{1}{n} \left(\log f+ \frac{|\nabla^\Sigma u|^2 + |\bar y|^2}{4}- \frac{|2H(\bar x)+\bar y|^2}{4} \right)  \right)^n t^{n+m}e^{\alpha}
    \end{align*}
By the inequality $ \lambda \leq e^{\lambda -1}$, we have
\begin{align*}
    &|\mathrm{det} D\Phi_t (\bar x,\bar y)| \\
    &\leq\exp \left(2b_1n \sqrt{|\nablas u ( \bar x ) |^2 + |\bar y |^2}  + \frac{n}{t}+ \log f+ \frac{|\nabla^\Sigma u|^2 + |\bar y|^2}{4}- \frac{|2H+\bar y|^2}{4} -n\right) t^{n+m}e^{\alpha}\\
    & =e^{\alpha + \frac{n}{t} -n} t^{m+n} f\exp \left(2b_1n \sqrt{|\nablas u ( \bar x ) |^2 + |\bar y |^2}  + \frac{|\nabla^\Sigma u|^2 + |\bar y|^2}{4}- \frac{|2H+\bar y|^2}{4}\right)\\
    & \leq e^{\alpha + \frac{n}{t} -n} t^{m+n} f(\bar x)\exp \left(2b_1n \sqrt{|\nablas u ( \bar x ) |^2 + |\bar y |^2}  + \frac{|\nabla^\Sigma u|^2 + |\bar y|^2}{4}- \frac{|2H(\bar x)+\bar y|^2}{4}\right)\\
    & = e^{\alpha + \frac{n}{t} -n} t^{m+n} f(\bar x)\exp \left(\left(\frac{\sqrt{|\nabla^\Sigma u|^2 + |\bar y|^2}}{2} + 2b_1n\right)^2\right) \exp \left(- \frac{|2H+\bar y|^2}{4} - 4 b_1^2 n^2\right)\\
    & = e^{\alpha + \frac{n}{t} -n-4b_1^2 n^2} t^{m+n} f(\bar x)\exp \left(\left(\frac{d(\bar x, \Phi_t(\bar x,\bar y))}{2t} + 2b_1n\right)^2\right) \exp \left(- \frac{|2H(\bar x)+\bar y|^2}{4} \right).
\end{align*}

\end{proof}

By combining inequalities \eqref{ineq:asymp-temp1} and \eqref{ineq:asymp-temp2} and Lemma \ref{lem:estimate-expASYMP}, we obtain
\begin{align*}
    \int_M &e^{- \left(\frac{d(\psi (p), p)}{2t} + 2b_1n\right)^2} \, d V (p) \\
    &\leq \int_\Sigma \left(\int_{T_x^\bot \Sigma} e^{-\left(\frac{d( x, \Phi_t(x, y))}{2t} + 2b_1n\right)^2} |\mathrm{det} D\Phi_t(x,y)| \chi_{A_t}(x,y) \, dy\,\right) d V (x)\\
    &=\int_\Sigma  \left(\int_{T_x^\bot \Sigma} e^{\alpha + \frac{n}{t} -n-4b_1^2 n^2} t^{m+n} f( x) \exp \left(- \frac{|2H( x)+ y|^2}{4} \right)dy\right)d V (x)\\
    & = t^{n+m} e^{\frac{n}{t} -n+\alpha-4b_1^2 n^2}(4 \pi)^\frac{m}{2}\int_\Sigma f (x)d V (x).
\end{align*}
Then

\begin{align}\label{temp}
    \frac{(4\pi)^{-\frac{n+m}{2}}}{t (h(t))^{n+m-1}}\int_M e^{- \left(\frac{d(\psi (p), p)}{2t} + 2b_1n\right)^2} \, d V (p)  \leq \frac{(4 \pi)^{-\frac{n}{2}}t^{n+m-1}}{(h(t))^{n+m-1}}  e^{\frac{n}{t} -n+\alpha-4b_1^2 n^2}\int_\Sigma f (x)d V (x)
\end{align}

Let us take limit $t \to \infty$ to the both sides of the above inequality. Then
by Lemma \ref{lem:theta-limitASYMP3} and \ref{lem:theta-limitASYMP4}, we get
$$P(4b_1 n )\theta_h  \leq \lim_{t \to \infty}\frac{t^{n+m-1}}{(h(t))^{n+m-1}} (4 \pi)^{-\frac{n}{2}} e^{\frac{n}{t} -n+\alpha-4b_1^2 n^2}\int_\Sigma f (x)d V (x).$$
Since
$$\lim\limits_{t \to \infty} \frac{t^{n+m-1}}{(h(t))^{n+m-1}} = \left(\lim\limits_{t \to \infty} \frac{t}{h(t)}\right)^{n+m-1} = \left(\lim\limits_{t \to \infty}\frac{1}{h'(t)}\right)^{n+m-1} \leq \frac{1}{(1+b_0)^{n+m-1}},$$
we have
$$P(4b_1 n )\theta_h  \leq  (4 \pi)^{-\frac{n}{2}} e^{ -n+\alpha-4b_1^2 n^2}\frac{1}{(1+b_0)^{n+m-1}} \int_\Sigma f (x)d V (x).$$
By taking a logarithmic function, we obtain
$$\log P(4b_1 n ) \theta_h + \frac{n}{2}\log (4 \pi) +n - \alpha + 4b_1^2 n^2 + (n+m-1) \log (1+b_0)\leq \log \int_\Sigma f(x) dV (x),$$
or equivalently,
$$\log P(4b_1 n ) \theta_h + \frac{n}{2}\log (4 \pi) +n  + 4b_1^2 n^2 + (n+m-1) \log \frac{1+b_0}{e^{2r_0b_1 + b_0}}\leq \log \int_\Sigma f(x) dV (x).$$
Multiplying the inequality by $f$ and integrate it over $\Sigma$, then
\begin{align*}
    \int_\Sigma &f \left(\log P(4b_1 n ) \theta_h + \frac{n}{2}\log (4 \pi) +n  + 4b_1^2 n^2 + (n+m-1) \log \frac{1+b_0}{e^{2r_0b_1 + b_0}}\right)d V (x) \\
    &\leq \int_\Sigma f d V (x) \left(\log \int_\Sigma f(x) dV (x)\right).
\end{align*}
By adding our assumption
$$\int_\Sigma f \log f - \int_\Sigma \frac{|\nablas f|^2}{f} - \int_\Sigma f |H|^2 = 0,$$
we conclude that
\begin{align*}
    \int_\Sigma &f \left(\log f + \log P(4b_1 n ) \theta_h + \frac{n}{2}\log (4 \pi) +n  + 4b_1^2 n^2 + (n+m-1) \log \frac{1+b_0}{e^{2r_0b_1 + b_0}}\right)d V (x) \\
    &- \int_\Sigma \frac{|\nablas f|^2}{f} - \int_\Sigma f |H|^2  \leq \int_\Sigma f d V (x) \left(\log \int_\Sigma f(x) dV (x)\right).
\end{align*}

It now remains to show the case when $\Sigma$ is disconnected. We follow Brendle \cite{B_log}  and notice that
\begin{align*}
    a\log(a)+b\log(b) <a\log(a+b)+b\log(a+b)=(a+b)\log(a+b).
\end{align*}
The above simple inequality will take care of the right hand side when we apply inequality \eqref{ineq:log-sobolevASYMP} on each connected component individually. And this completes our proof of Theorem \ref{thm:main-logASYMP}.

\bibliography{ref}
\bibliographystyle{plain}
\end{document}